\def\dcl{\DeclareMathOperator}
\dcl{\Aut}{Aut}
\dcl{\Hot}{Hot}
\dcl{\supp}{supp}
\dcl{\ind}{ind}
\dcl{\End}{End}
\dcl{\Hom}{Hom}
\dcl{\Mor}{Mor}
\dcl{\walks}{Walks}
\dcl{\rwalks}{RWalks}
\dcl{\paths}{Paths}
\dcl{\sk}{sk}
\dcl{\cycles}{Cycles}
\dcl{\rcycles}{RCycles}
\dcl{\WPS}{WP}
\dcl{\GR}{GR}
\dcl{\im}{Im}
\def\k{{\Bbbk}}
\def\bp{bimodule problem\xspace}
\def\WP{WP\xspace}
\def\Ob{\mathop{\rm Ob}\nolimits}
\def\add{\mathop{\rm add}\nolimits}
\def\rep{\mathop{\rm rep}\nolimits}
\def\red{\mathop{\rm red}\nolimits}
\def\Ann{\mathop{\rm Ann}\nolimits}
\def\Rad{\mathop{\rm Rad}\nolimits}
\def\con{\mathop{\rm con}\nolimits}
\def\dim{\mathop{\rm dim}\nolimits}
\def\ds{\displaystyle}
\def\geqs{\geqslant}
\def\leqs{\leqslant}
\def\vi{\varphi}
\newcommand\bsK{\mathsf K}
\newcommand\bsV{\mathsf V}
\newcommand{\py}{{\mathsf p}}
\newcommand{\roots}[1]{\Re_{#1}^{+}}
\newcommand{\K}{\mathsf K}
\newcommand{\V}{\mathsf V}
\def\cC{\complement}
\def\define#1{\emph{#1}}
\def\dsum{\mathop\oplus\limits}
\def\omto{\mathop{\mapsto}\limits}
\def\omtol{\mathop{\longmapsto}\limits}
\newenvironment{orditem}%
{\begin{list}{\vrule height .9ex width .6ex depth -.3ex }{%
      \parsep=\parskip \itemsep=0pt \partopsep=0pt \topsep=0pt plus 1pt
      \rightmargin=0pt \leftmargin=1.75\parindent
      \def\makelabel##1{\hss\llap{##1}}
      \listparindent=\parindent
      \itemindent=0pt
      \labelsep=.5\parindent
      \@beginparpenalty=10000
      }}
{\end{list}}
\newcounter{ordenumbri} \def\theordenumbri{\arabic{ordenumbri})}
\newenvironment{ordenumbr}%
{\begin{list}{\theordenumbri}{%
      \usecounter{ordenumbri}
      \parsep=\parskip \itemsep=0pt \partopsep=0pt \topsep=0pt plus 3pt
      \rightmargin=0pt \leftmargin=0pt
      \def\makelabel##1{##1\hskip.5em}\listparindent=\parindent
      \labelwidth=0pt
      \itemindent=\parindent
      \labelsep=0pt
      \@beginparpenalty=10000
      }}
{\end{list}}
\newcounter{ordenumi} \def\theordenumi{\arabic{ordenumi}.}
{\begin{list}{\theordenumi}{%
      \usecounter{ordenumi}
      \parsep=\parskip \itemsep=0pt \partopsep=0pt \topsep=0pt plus 3pt
      \rightmargin=0pt \leftmargin=0pt
      \def\makelabel##1{##1\hskip.5em}\listparindent=\parindent
      \labelwidth=0pt
      \itemindent=\parindent
      \labelsep=0pt
      \@beginparpenalty=10000
      }}
{\end{list}}
\newcounter{listenumli} \def\thelistenumli{(\alph{listenumli})}
{\begin{list}{\thelistenumli}{%
      \usecounter{listenumli}
      \parsep=\parskip \itemsep=0pt \partopsep=0pt
      \topsep=1pt plus1pt minus.5pt
      \rightmargin=0pt \leftmargin=3\parindent
      \def\makelabel##1{\hss\llap{##1}}\listparindent=\parindent
      \labelwidth=0pt
      \labelsep=.25em
      \@beginparpenalty=10000
      }}
{\end{list}}
\def\myto{{\,\xrightarrow{\hspace{10pt}}\,}}
\def\wt{\widetilde}
\newcommand{\bSigma}{\mathfrak{L}}
\newcommand{\bOmega}{\mathfrak{M}}
\def\si{{\Sigma}}
\def\Si{{{\Sigma}\,}}
\def\siz{{{\Sigma}_0}}
\def\sio{{{\Sigma}_1}}
\def\sizo{{{\Sigma}_1^0}}
\def\sioo{{{\Sigma}_1^1}}
\def\tisi{{\widetilde{\Sigma}}}
\def\hS{{\widehat {\Sigma}}}
\def\bsi{{\bSigma}}
\def\bsiz{{\bSigma}_0}
\def\bsio{{\bSigma}_1}
\def\bsit{{\bSigma}_2}
\def\btisi{{\wt{\bSigma}}}
\def\sT{{\mathit T}}
\def\rT{{\mathrm T}}
\def\rQ{{\mathrm Q}}
\def\rP{{\mathrm P}}
\def\bZ{{\mathbb Z}}
\def\ti{\Tilde}
\def\ca{\mathscr A}
\def\cb{\mathscr B}
\def\cc{\mathscr C}
\def\tica{\ti\A}
\newcommand{\sC}{\triangle}
\def\sg{G}
\def\tial{{\ti A}}
\def\tibe{{\ti B}}
\def\tiga{{\ti C}}
\def\tix{{\ti x}}
\def\tia{{\ti a}}
\def\Chi{\mathcal X}
\newcommand{\ww}{{\vspace{0.01ex}} }
\newtheorem{fact}{Fact}
\newtheorem{step}{Step}
\newtheorem{substep}{Substep}[step]
\newtheorem{subsubstep}{Subsubstep}[substep]
\def\best{\begin{step}\ww}
\def\enst{\ww\end{step}}
\def\besst{\begin{substep}\ww}
\def\ensst{\ww\end{substep}}
\def\bessst{\begin{subsubstep}\ww}
\def\enssst{\ww\end{subsubstep}}
\def\befa{\begin{fact}\ww}
\def\enfa{\ww\end{fact}}
\def\A{\mathcal A}
\let\ca=\A
\begin{document}

\title{Galois Coverings of One-Sided Bimodule Problems}
\author{Vyacheslav Babych}
\email{vyacheslav.babych@univ.kiev.ua}
\address{Department of Mechanics and Mathematics,
Taras Shevchenko National University of Kyiv, 64, Volodymyrs'ka St., Kyiv, Ukraine}
\author{Nataliya Golovashchuk}
\email{golova@univ.kiev.ua}
\address{Department of Mechanics and Mathematics,
Taras Shevchenko National University of Kyiv, 64, Volodymyrs'ka St., Kyiv, Ukraine}

\abstract{english}{Applying geometric methods of $2$-dimensional cell complex theory, we construct a Galois covering of a bimodule problem satisfying some structure,
triangularity and finiteness conditions in order to describe the objects of finite representation type.
Each admitted bimodule problem $\ca$ is endowed with a quasi multiplicative basis.
The main result shows that for a problem from the considered class having some finiteness restrictions and the schurian universal covering $\ti\ca$,
either $\ca$ is schurian, or its basic bigraph contains a dotted loop, or it has a standard minimal non-schurian bimodule subproblem.}

%

\keywords{cell complex, covering, bimodule problem, Tits form, schurity.}

\shortAuthorsList{V.\,Babych, N.\,Golovashchuk}


\msc{57M20, 57M10, 16D90, 16P60, 15A04.}

\maketitle


\section*{Introduction}

Description of bimodule problems of finite representation type is an important task of representation theory \cite{Dr2,GaRo,essen}.
A useful tool for a finiteness problem solution is so called ``covering method'' (\cite{Bib-BongartzCritical,DrOvs})
which is especially effective when the basis of bimodule problem is multiplicative, i.\,e. the composition of two composable basic elements is either zero or a basic element too~(\cite{RS}).
For a class of admitted bimodule problems, we introduced a quasi multiplicative basis (\cite{bgoadm2011}) generalizing the notion of a multiplicative one. Following \cite{BGOR}, we introduce standard minimal non-schurian admitted bimodule problem
and use the result of \cite{bg2} stating that minimal admitted non-schurian bimodule problem
with weakly positive Tits quadratic form is standard. A similar result was obtained by the authors for another class of bimodule problems previously in~\cite{bg0}.


From the representation theory point of view,
it is important to determine the minimal non-schurian subproblems effectively,
and describe the classes of problems having a correspondence between the Tits form and the category of representations.
In the simplest cases, the dimensions of indecomposable representations correspond to the roots of the Tits form (\cite{Ga1,KR}).

We apply geometrical technique for investigation of bimodule problem representation category properties for a class of bimodule problems endowed with a quasi-multiplicative basis. The proofs use geometrical methods similar to those used in the geometric group theory \cite{Ols1}. We associate a $2$-dimensional cell complex with a faithful admitted bimodule problem in order to construct a corresponding Galois covering. This is the main tool in the geometrical investigation of bimodule problems. We use an universal covering technique to study the representation type of the initial admitted bimodule problem. A Galois covering of a bimodule problem of finite type induces a covering of corresponding representation categories with the same fundamental group (\cite{DOF,DrOvs}).

The main result (Theorem \ref{theorem-support-indecomposable-and-covering}) states the existence of standard minimal non-schurian subproblem for a finite dimensional non-schurian bimodule problem with weakly positive Tits form and schurian universal covering. After \cite{bgoadm2011,bg2}, this is the next step on the way of representation type characterization for finite dimensional problems from the considered class.

We use the definitions, notations and statements from \cite{bg,bgoadm2011,essen,bg2}. The considered class $\cC$ of bimodule problems and the notion of quasi multiplicative basis are defined in \cite{bgoadm2011}. The notions and facts from the theory of quadratic forms can be found in \cite{Dr2,KR,Ri}.

\section{Preliminaries}


\subsection{Bigraphs.}
\label{bigraph}

A \define{directed bigraph} $\Sigma=(\Sigma_0,\Sigma_1,s,e,\deg)$ is given by a set $\Sigma_0$ of vertices, a set $\Sigma_1$ of arrows,
two maps $s,e:\Sigma_1\to\Sigma_0$ defining an initial and a terminal vertices of an arrow,
and the degree $\deg:\Sigma_1\to\{0,1\}$ indicating a type of an arrow.
For $X$, $Y\in\siz$, let $\sio(X,Y)=\{x\in\sio\mid s(x)=X$, $e(x)=Y\}$.
Here $\Sigma_1=\Sigma_1^0\sqcup\Sigma_1^1$, where $\Sigma_1^i=\deg^{-1}(i)$, $i=0,1$.
The arrows from $\sizo$ ($\sioo$) are called \define{solid} (\define{dotted}).
If $\Sigma^\prime=(\Sigma_0^\prime,\Sigma_1^\prime,s',e',\deg')$ is another  bigraph,
then a morphism $f:\Sigma\to\Sigma^\prime$ of  bigraphs is a pair
$f=(f_0,f_1)$, where $f_0:\Sigma_0\to\Sigma_0^\prime$, $f_1:\Sigma_1\to\Sigma_1^\prime$
such that  $f_0 s= s' f_1$, $f_0 e=e' f_1$.
A bigraph $\Sigma$ is called \define{connected} (\define{$0$-connected}) if there does not exist a decomposition $\Sigma_0=\Sigma_0^1\sqcup\Sigma_0^{2}$ into nonempty
subsets such that $s(a)\in\Sigma_0^i$
implies $e(a)\in\Sigma_0^i$ for every $a\in\sio$ ($a\in\Sigma_1^0$), $i=1,2$. A connected bigraph is called:
\begin{orditem}
\item
a \define{loop}, if $|\Sigma_0|=|\Sigma_1|=1$;
\item
a \define{circle} $C_{n}$ if $|\siz|=|\Sigma_1|=n\geqs 2$,
and $|e^{-1}(A)|+|s^{-1}(A)|=2$ for any $A\in\Sigma_0$;
\item
a \define{chain}, if there are $X_1,X_2\in\Sigma_0$
such that  $|e^{-1}(X_i)|+|s^{-1}(X_i)|=1$, $i=1,2$,
and $|e^{-1}(A)|+| s^{-1}(A)|=2$ for every $A\in\Sigma_0\backslash\{X_1,X_2\}$.
\end{orditem}

A bigraph $\Sigma$ is called \define{locally finite} if any $X\in\Sigma_{0}$ is incident to finitely
many arrows, and \define{finite} provided $\Sigma_0$ and $\Sigma_1$ are finite.

\subsection{Paths and walks in bigraph.}\label{intr06}

For a bigraph $\Sigma=(\Sigma_0,\Sigma_1,s,e,\deg)$ let $\Sigma_1^{-1}$
be the set of the elements $x^{-1}$ for all $x \in\Sigma_1$,
and let ${\hS}$ be the bigraph such that
$\hS_0=\Sigma_0, \hS_1=\Sigma_1\sqcup\Sigma_1^{-1}.$
The maps $s,e:\hS_1\myto\hS_0$ and $\deg$ restricted on $\Sigma_1\subset\hS_1$
coincide with those for $\Sigma$, and
$e(x^{-1})=s(x)$, $s(x^{-1})=e(x)$, $\deg x^{-1}=\deg x$.

A \define{walk} (a \define{path}) on $\Sigma$ of the \define{length} $m$ is a sequence $\omega=x_1x_2\ldots x_m$
of arrows $x_i \in{\hS}_1$ ($x_i\in\Sigma_1$), $1\leqslant  i\leqslant m$, such that  $s(x_i)=e(x_{i+1})$ for all $1 \leqslant i < m$.
%
%
%
We assume that $s(\omega)=s(x_m)$, $e(\omega)=e(x_1)$, and for every $X\in\Sigma_{0}$ there exists a unique (\define{trivial}) path ${\mathds 1}_{X}$ of the length $0$
such that $s({\mathds 1}_{X})=e({\mathds 1}_{X})=X$.
The \define{composition} $\omega\omega'$ of the walks $\omega$ and $\omega'$
is naturally defined if $s(\omega)=e(\omega')$.


The walks and the paths on $\Sigma$ form the categories denoted by $\walks_{\Sigma}$ and $\paths_{\Sigma}$ respectively,
both with the set of objects $\Sigma_0$.
For any $X,Y\in\Sigma_0$ the set $\walks_{\Sigma}(X,Y)$ ($\paths_{\Sigma}(X,Y)$)
contains all the walks (the paths) $\omega$ in $\Sigma$ such that  $s(\omega)=X$, $e(\omega)=Y$.
There exists an obvious inclusion functor $\imath:\paths_{\Sigma}\hookrightarrow\walks_{\Sigma}$.
We denote by ${\mathsf W}_{\Sigma}$ the \define{groupoid} of walks on $\Sigma$,
${\mathsf W}_{\Sigma}=\walks_{\Sigma}/(x\circ x^{-1}={\mathds 1}_{e(x)},x\in \hS_1)$,
and by $r_\Sigma:\walks_\Sigma\myto{\mathsf W}_{\Sigma}$ the canonical projection.

A morphism $f=(f_0,f_1):\si\to\si'$ of bigraphs naturally induces a functor $\walks_f:\walks_\si\to\walks_{\si'}$, namely $\Ob\walks_\si=\si_0\ni X\mapsto f_0(X)\in\Ob\walks_{\si'}=\si'_0$, $\walks_\si(X,Y)\ni\omega=x_1\ldots x_n\mapsto\omega'=f_1(x_1)\ldots f_1(x_n)\in\walks_{\si'}(f_0(X),f_0(Y))$ for any $X,Y\in\Ob\walks_\si$.

\subsection{Reduced walks.}
A walk $\omega$ is called \define{reduced} if it does not contain a walk $xx^{-1}$ for some $x\in\hS_1$,
and \define{reducible} in opposite case.
The operation $\omega_1 xx^{-1}\omega_2\mapsto\omega_1\omega_2$ we call a \define{reducing} of the walk (by the pair $x x^{-1}$).

If $\omega=\omega_1\omega_2$ is a cyclic walk, then $\omega'=\omega_2\omega_1$ is cyclic as well,
and we say that $\omega$ and $\omega'$ are \define{cyclically equivalent}.
A cyclic walk $\omega$ is called \define{cyclically reduced}
if every cyclically equivalent walk is reduced.
A class $\langle\omega\rangle$ of the cyclic equivalence containing the cyclic walk $\omega$ is called a \define{cycle}.
We denote by $\cycles_{\Sigma}$ the set of all cycles on $\Sigma$, and by $\rwalks_\Sigma$ ($\rcycles_\Sigma$)
the set of all reduced walks (cyclically reduced cycles) on $\Sigma$.

%
%
%
%
%


A bigraph morphism $f=(f_0,f_1):\si\to\si'$ induces in a natural way a map
$\cycles_{\si}\ni\langle\omega\rangle \omtol^{\cycles_f}\langle\walks_f(\omega)\rangle\in\cycles_{\si'}$.

\subsection{2-dimensional complex over a bigraph.}

An (abstract) $2$-dimensio\-nal \define{complex} (shortly, complex) $\bSigma$ is defined by:
\begin{orditem}
\item
a bigraph $\Sigma=(\Sigma_0,\Sigma_1,s,e,\deg)=\sk_1(\bsi)$;
the vertices from $\bsiz=\siz$ we call also \define{$0$-dimensional cells},
and the arrows from $\bsio=\sio$ we call \define{$1$-dimensional cells};
\item
a set $\bsit=\{\sC_i, i\in I=I(\bsi)\}$ which elements are called \define{$2$-dimen\-sional cells} or $2$-\define{cells};
\item
a \define{boundary map} $\partial=\partial_\bsi:\bsit\myto\rcycles_{\Sigma}$.
\end{orditem}

We say that $A\in\bsiz$ ($x\in{\widehat\bsi}_1=\widehat\si_1$) belongs or is incident to $\sC_i$
and write $A\in\sC_i$ ($x\in\sC_i$) provided $A$ (at least one of $x^{\pm 1}$)
is incident or belongs to some element of $\partial(\sC_i)$.

If $\bsi^\prime$ is a complex, then a \define{morphism} $\bsi\to\bsi'$ of complexes
is a triple $f=(f_0,f_1,f_2)$ such that $(f_0,f_1)$ is a morphism of the underlying bigraphs
(denoted by the same letter $f$),
and $f_2:\bsit\myto\bsi_2^\prime$ is a map satisfying $\cycles_f\partial_\bsi=\partial_{\bsi'}f_2$.
The morphism $f$ is called \define{monomorphism (epimorphism)}
provided all $f_{i}$ are monomorphisms (epimorphisms).

The notion of \define{subcomplex} of a  complex
is defined in a standard way.
If $S\subset\bsiz\sqcup\bsio\sqcup\bsit$, then by $[S]$
we denote the subcomplex in $\bsi$, generated by $S$,
i. e. the minimal subcomplex in $\bsi$ containing $S$.
For $S\subset \Sigma_{0}$ by $\Sigma_S$ and $\bSigma_S$ we denote the restriction of $\Sigma$ and $\bSigma$ to $S$.

Let $\bsi$ be a complex.
We say that the complex $\bOmega$ together with a morphism $\py:\bOmega\to\bsi$
forms a \define{complex over} $\bsi$.
The morphism $f:(\bOmega,\py)\to(\bOmega',\py')$ of complexes over $\bsi$
is a morphism of complexes $f:\bOmega\myto\bOmega'$ such that $\py=\py'f$.


\subsection{Quotient complex.}

Let  $\bsi$ be a complex, let $\sim$ be an equivalence relation on $\bsiz\sqcup\bsio$ such that $A\not\sim a$ for any $A\in\si_0$, $a\in \si_1$, and if $a\sim b$ for some $a,b\in \si_1$, then $\deg_\si(a)= \deg_\si(b)$, $s(a)\sim$ $s(b)$, $e(a)\sim$ $e(b)$. A \define{quotient bigraph} $\si/_{\sim}$ is defined by the set $\bsiz/_{\sim}$ of vertices, the set $\bsio/_{\sim}$ of arrows, and correctly induced $e$, $s$ and $\deg$. Let  $\py=(\py_0,\py_1):\si\myto \si/_{\sim}$ be the natural bigraph epimorphism. If $\cycles_\py(\partial_{\bsi}(\sC))$ is cyclically reduced for any 2-cell $\sC\in\bsi_2$, then we set $(\bsi/_{\sim})_{2}=\bsit$ and $\partial_{\bsi/_{\sim}}=\cycles_\py\partial_{\bsi}$ that defines the quotient complex $\bsi/_{\sim}$ and the morphism $\py=(\py_0,\py_1,\py_2):\bsi\myto\bsi/_{\sim}$ where $\py_2=1_{\bsi_2}$.

\subsection{Homotopy relation.}

The structure of a $2$-dimensional complex $\bsi$ over a bigraph $\si$ induces the homotopy relation on $\walks_{\Sigma}$.
For any $\omega,\omega_1,\omega_2\in\walks_\si$,
a following transformations of walks:
\begin{ordenumbr}
\item  $\omega_1 x x^{-1} \omega_2\leadsto\omega_1 \omega_2$
or $\omega_1\omega_2\leadsto\omega_1 x x^{-1}\omega_2$
for any $x\in{\hS}_1$;
\item
$\omega_1\omega_2\leadsto\omega_1\omega\omega_2$
or $\omega_1\omega\omega_2\leadsto\omega_1\omega_2$
such that $ \langle\omega\rangle=\partial(\sC)$, $\sC\in\bsit$;
\end{ordenumbr}
are called \define{elementary homotopies}.

Two walks $\omega,\omega^{\prime}$ on $\si$ are said to be \define{homotopic}
provided there exists  a sequence $E= (E_1,\ldots,E_N)$ of elementary homotopies
such that $\omega=\omega_0\stackrel{E_1}{\leadsto}\omega_1\stackrel{E_2}{\leadsto}
\ldots\stackrel{E_N}{\leadsto}\omega_N=\omega^\prime$, $N\geqs 0.$
In this case, we  write
$\omega\stackrel{E}{ \leadsto}\omega^{\prime}$ or $\omega \sim\omega^{\prime}$ and say that $E$ is \define{a homotopy} between $\omega$ and $\omega^\prime$.

We indicate the simple properties of homotopies:
\begin{ordenumbr}
\item
the relation $\sim$ is an equivalence;
\item
if $\omega \sim \omega^{\prime}$, then $s(\omega) = s(\omega^{\prime})$, $e(\omega) = e(\omega^{\prime})$;
\item
if $\omega_1\sim\omega_1^\prime$, $\omega_2\sim\omega_2^\prime$,
then $\omega_1\omega_2\sim\omega_1^\prime\omega_2^\prime$
provided one of the compositions is defined.
\end{ordenumbr}

For a walk $\omega$ we denote by $[\omega]$ the \define{homotopic class of $\omega$}. Once the composition $\omega\omega'$ of walks $\omega$, $\omega'$ is specified, the composition of their classes is correctly defined by the equality
$[\omega]\cdot [ \omega^{\prime} ] = [ \omega\omega^{\prime} ]$.
We denote by $\Hot_{\bsi}$ the quotient category of homotopic classes of walks on $\si$,
$\Hot_{\bsi}(X,Y)=\walks_{\si}(X,Y)/_{\sim}$.
Note that $\Hot_{\bsi}$ is a groupoid.


\subsection{Fundamental group of a complex.}\label{intr010}
A cyclic walk on $\bsi$ is called \define{contractible} if it is homotopic to a trivial walk.
For any $B\in\bsiz$, we define \define{the fundamental group} $\sg(\bsi,B)$ of $\bsi$
with the base vertex $B$  as $\Hot_\bsi(B,B)$.
If $\bsi$ is connected, then the fundamental groups with different base vertices are isomorphic,
that allows  to define \define{the fundamental group} $\sg(\bsi)$ of the connected complex $\bsi$.


\subsection{Covering of complex.}
An epimorphism $\py:\btisi\to\bsi$ of complexes
is called \define{a covering morphism} (or \define{covering} of $\bsi$)
with the base \mbox{$\bsi$}, if:
\begin{orditem}
\item
for every $\omega\in\walks_\bsi(X,Y)$ and ${\wt X}\in\py_0^{-1}(X)$ (${\wt Y}\in\py_0^{-1}(Y)$),
there exist an unique ${\wt Y}\in\btisi_0$ (${\wt X}\in\btisi_0$)
and a unique walk ${\wt \omega}:{\wt X}\myto{\wt Y}$ such that $\walks_\py({\wt \omega})=\omega$
(the property of the uniqueness of lifting of walks, \cite{Wh});

\item
for every $\sC\in\bsit$, $A\in\bsiz$ such that  $A\in\partial(\sC)$ and ${\wt A}\in\py_0^{-1}(A)$
there exists unique ${\wt\sC}\in{\wt\bsi}_2$ such that  ${\wt A}\in\partial({\wt\sC})$
and $\py_2({\wt\sC})={\sC}$ (the property of homotopy lifting uniqueness, \cite{Wh}).
\end{orditem}


Coverings with a fixed base $\bsi$ form the subcategory in the category of complexes over $\bsi$.
An object $\py:\btisi\myto\bsi$ in this category is called a \define{universal covering} of $\bsi$ if every morphism $f:\py'\to\py$ is an isomorphism.

\subsection{Construction of universal covering.}\label{intr012}

We construct the universal covering $ \btisi$ of the complex $\bsi$ similarly to \cite{DrOvs,bg}.
\begin{ordenumbr}
\item
We fix $B \in\bsiz$ and define the set ${\btisi}_0$ by
$\ds{\sqcup_{X\in\bsiz}\Hot_\bsi(B,X)}$.
\item
The arrows in $\btisi$ are the pairs $( [ \omega ], x)$ of $[ \omega ]\in\btisi_0$,
$x\in\bsio$ such that $s(x)=e(\omega)$, $\deg_\btisi(([\omega ],x))=\deg_\bsi(x)$,
$s(([\omega],x))=[\omega ]$, $e(([ \omega ],x))=[ x\omega ]$.
\item
The covering morphism
$\py : \ti \si\myto\si$ on the underlying bigraphs is defined by
$\py_0([\omega])=e(\omega)$, $\py_1(([\omega ],x))=x$.
Obviously, the constructed $\py$ has a property of the uniqueness
of lifting of walks.
\item
If $\sC\in\bsi_2$ and $\langle\omega\rangle=\partial_\bsi(\sC)$,
then every walk ${\ti\omega}\in\walks_\py^{-1}(\omega)$ is also cyclic
since $\omega$ is contractible on $\bsi$. Let $\Theta=\ds{\sqcup_{\sC\in\bsit}}\cycles_\py^{-1}(\partial_\bsi(\sC))$.
Then we define $\btisi_2$ by the set $\{{\ti \sC_{\theta}}\}_{{\theta} \in \Theta}$,
the boundary map in $\btisi$ by the equality ${\partial}_\bsi({\ti\sC_{\theta}})=\theta$,
and ${\py}_2({\wt {\sC}}_{\theta})=\sC$
provided $\cycles_\py({\theta})=\partial_\bsi(\sC)$.
\end{ordenumbr}

The fundamental group $\sg=\sg(\bsi,B)$ acts on $ \btisi$ as follows:
if
$[\omega_B]\in\sg(\bsi,B)$, $[\omega]\in\btisi_0$,
then $[\omega][\omega_B]=[\omega\omega_B]\in\btisi_0$.
The structure maps of $\btisi$ commutes with the action of $\sg$,
hence a quotient complex
$\btisi/\sg=(\sg\btisi_0,\sg \btisi_1,\sg \btisi_2)$ is defined,
and the morphism $\py$ induces an isomorphism
of the complexes $\btisi/\sg$ and $\bsi$.

\subsection{Tits quadratic form of bimodule problem.}

We refer to~\cite{bgoadm2011} for the notations and facts from the theory of bimodule problems. A \define{quadratic form} $q =q_{\si}$ of the bigraph $\si$  is defined on $x=(x_A)_{A\in\si_0}\in\bZ^{\si_0}$ by
$$\ds q(x)={\sum}_{(A,B)\in\si_0\times\si_0}(\delta_{AB}+ |\si_1^1(A,B)|-|\si_1^0(A,B)|)x_Ax_B$$
where $\delta$ is the Kronecker delta.
By definition, the \define{Tits quadratic form $q_\ca$ of bimodule problem $\ca$} is a quadratic form $q_{\si_\ca}$ of a basic bigraph $\si_\ca$.

We denote by $\roots{q}$ the set of all positive roots of $q$ (i. e. $x>0$ such that $q(x)=1$).
For a quadratic form $q$, by $(,)$ or $(,)_q$ we denote the
corresponding symmetric bilinear form, $(x,y)=(q(x+y)-q(x)-q(y))/2$.
If $q$ has a sincere positive root, then $q$ is called \define{sincere}.

The form $q$ is called \define{weakly positive} provided  $q(x)>0$ for $x>0$.
We denote by ${\WPS}$ the set of all weakly positive locally finite forms.

If $q(x_1,\dots,x_n)\in\WPS$, $n\geqs 2$, $e_i$ is a simple root and $z\in\roots{q}$ is sincere,
then $2(e_i,z)\in\{0,1,-1\}$
and $z-2(e_i,z)e_i\in\roots{q}$.
A sincere $x\in\roots{q}$ is called a \define{basic root} if there exist $i_1,i_2\in I=\{1,\dots,n\}$
such that  $2(e_{i_1},x)=2(e_{i_2},x)=1$, $x_{i_1}=x_{i_2}=1$ and $2(e_i,x)=0$, $i\in I\backslash\{i_1,i_2\}$.
We call $i_1$, $i_2$ the \define{singular} vertices of $x$.


\subsection{Representation category.} For a bimodule problem $\ca=(\bsK,\bsV)$, \define{a representation} $M$ of $\ca$ is a pair $M=(M_\bsK,M_\bsV)$ where $M_\bsK\in\Ob\add\bsV=\Ob\add\bsK$ and $M_\bsV\in\add\bsV(M_\bsK,M_\bsK)$. If $M$, $N$ are two representations of $\ca$, then a \define{morphism} $f$ from  $M$ to $N$ is a morphism $f\in\add\bsK(M_\bsK,N_\bsK)$ such that $N_\bsV f-f M_\bsV=0$. The composition of morphisms and the unit morphisms in the \define{representation category} $\rep\ca$ and in the category $\add\bsK$ coincide. All indecomposable representations form the subcategory in $\rep{\mathcal A}$ which we denote by $\ind{\mathcal A}$.


The category $\rep{\mathcal A}$ is a Krull-Schmidt category. Bimodule problem $\A$ is called
of \define{finite representation type} provided $\rep{\mathcal A}$
has finitely many isoclasses of indecomposable objects,
and of \define{infinite representation type} in opposite case. $\A$ is called
\define{locally representation-finite} provided for any object $A\in\Ob\bsK$, there are finitely many isoclasses of indecomposable representations $M$ such that $A\in\supp\dim_\A M$.

A representation $M\in \rep {\mathcal A}$ is called \define{sincere}
provided $(\dim M)_A\ne0$ for any
$A\in\Ob\bsK$. A bimodule problem ${\mathcal A}$ is called
\define{sincere} if there exists a sincere indecomposable
representation $M\in \rep {\mathcal A}$.

A representation $M\in\ind \ca$ is called \define{schurian} provided it has only scalar endomorphisms.
A bimodule problem ${\ca}$ is called \define{schurian} provided every $M\in\ind \ca$ is schurian (\cite{Ga,KR}).

\begin{lemma}[\cite{Ga1,KR}]\label{Lemma-le_defshur}
Let $\A$ be a finite dimensional schurian
\bp. Then $\A$ is  representation finite,
its Tits form $q_{\A}$ is unit integral  and \WP,
the map $\dim_\A:\ind \A/_{\simeq}\to\roots{q_{\A}}$ is a bijection, where $\ind \A/_{\simeq}$ denote the set of all isoclasses of indecomposable representations.
\end{lemma}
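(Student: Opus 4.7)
My approach would be variety-theoretic: encode isomorphism classes of representations of fixed dimension $d$ as $G(d)$-orbits on an affine representation space $R(d)$, and read $q_\A$ off as a dimension count. For each $d = (d_A)_{A\in(\si_\A)_0}\in\bN^{(\si_\A)_0}$, fix $M_\bsK^d = \bigoplus_A A^{d_A}$ and set $R(d) = \add\bsV(M_\bsK^d, M_\bsK^d)$, the affine space parametrizing all choices of $M_\bsV$. The algebraic group $G(d) = \Aut_{\add\bsK}(M_\bsK^d)$ acts on $R(d)$ by conjugation, its orbits coincide with isomorphism classes of representations of $\A$ of dimension $d$, and the stabilizer of $M = (M_\bsK^d, M_\bsV)$ is the unit group $\End_\A(M)^\times$.

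First I would establish the fundamental identity
$$q_\A(d) \;=\; \dim G(d) - \dim R(d),$$
which follows directly from the definition of $q_\A$ once one identifies $\dim G(d) = \sum_A d_A^2 + \sum_{A,B}|(\si_\A)_1^1(A,B)|\,d_A d_B$ (scalars plus the radical contributed by dotted arrows) and $\dim R(d) = \sum_{A,B}|(\si_\A)_1^0(A,B)|\,d_A d_B$ (solid arrows giving the bimodule). Combined with orbit-stabilizer, this yields for every $M$ of dimension $d$ the bound
$$\dim\End_\A(M) \;=\; \dim G(d) - \dim\mathrm{Orbit}(M) \;\geq\; q_\A(d).$$
Unit integrality of $q_\A$ is then immediate from the formula.

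The main obstacle is the finiteness step: schurity alone must force $R(d)$ to have only finitely many $G(d)$-orbits, for every $d$. The idea is that an indecomposable $M$ with $\End_\A(M) = \k$ has $\dim\mathrm{Orbit}(M) = \dim G(d) - 1$, the maximum orbit dimension allowed for an indecomposable; if infinitely many iso classes of indecomposables of dimensions $\leq d$ existed, a constructibility argument (orbits are locally closed in the irreducible variety $R(d)$, with only finitely many possible direct-sum decompositions of dimension $d$) would produce a positive-dimensional algebraic family of such orbits, and a generic deformation in that family would either split off a direct summand or yield a non-scalar endomorphism via the tangent direction to the moduli — both contradicting schurity. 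Since any representation of dimension $d$ decomposes into indecomposables of dimensions $\leq d$, this upgrades to finiteness of all $G(d)$-orbits on $R(d)$, proving that $\A$ is of finite representation type.

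Weak positivity then follows at once: $R(d)$ being irreducible with finitely many orbits has a unique open orbit, containing some $M$ with $\dim\mathrm{Orbit}(M)=\dim R(d)$, so
$$q_\A(d) \;=\; \dim\End_\A(M) \;\geq\; 1,$$
giving $q_\A\in\WPS$. For this generic $M$, $\End_\A(M) = \k$ is local, hence $M$ is indecomposable and $d=\dim_\A M\in\roots{q_\A}$. Conversely, given any positive root $d$, the same argument produces the indecomposable $M$ in the open orbit as the unique indecomposable of dimension $d$: if $N$ is any other indecomposable of dimension $d$, its orbit has the maximal dimension $\dim G(d)-1 = \dim R(d)$ (as $q_\A(d)=1$), so coincides with the open orbit, forcing $N\simeq M$. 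This gives both surjectivity and injectivity of $\dim_\A:\ind\A/_\simeq\to\roots{q_\A}$.
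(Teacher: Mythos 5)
The paper does not actually prove Lemma~\ref{Lemma-le_defshur}: it is imported from \cite{Ga1,KR}, where the standard arguments run through the reduction/differentiation algorithms for bimodule problems rather than through representation varieties. Your variety-theoretic route is a legitimate alternative, and most of its pieces are sound: the identity $q_\A(d)=\dim G(d)-\dim R(d)$, the orbit--stabilizer count $\dim\End_\A(M)=\dim G(d)-\dim\mathrm{Orbit}(M)$, and --- \emph{once finiteness of orbits is known} --- the dense-orbit argument giving weak positivity and the bijection $\dim_\A:\ind\A/_{\simeq}\to\roots{q_\A}$ all work as you describe.

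The genuine gap sits exactly at what you call the main obstacle. The claim that a positive-dimensional family of schurian indecomposables of dimension $d$ ``would either split off a direct summand or yield a non-scalar endomorphism via the tangent direction'' is false at the dimension $d$ itself: for the Kronecker bigraph (two solid arrows $X\to Y$) and $d=(1,1)$ there is a whole $\mathbb{P}^1$ of pairwise non-isomorphic bricks, and no deformation inside that family splits anything off or produces a non-scalar endomorphism in dimension $(1,1)$. The contradiction with schurity only appears one step later: a tangent direction transverse to the orbit is a non-split self-extension class $0\to M\to E\to M\to 0$, and it is the middle term $E$, of dimension $2d$, that carries the non-zero nilpotent endomorphism $E\twoheadrightarrow M\hookrightarrow E$ and must then be shown indecomposable (using that $M$ is a brick). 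Without this passage to $2d$, the finiteness step --- and with it representation-finiteness, weak positivity, and surjectivity of $\dim_\A$ --- remains unproved. A second, smaller gap: unit integrality is not ``immediate from the formula,'' since $q_\A(e_A)=1+|\si_1^1(A,A)|-|\si_1^0(A,A)|$; you must first use schurity to exclude loops at $A$ (a dotted loop makes the endomorphism space of the one-dimensional representation concentrated at $A$ too large, while a solid loop yields a Jordan-block indecomposable with non-scalar endomorphisms).
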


\subsection{Covering of bimodule problem.}
A bimodule problem $\ti{\ca}=(\ti \K , \ti \V )$ is called a \define{covering} of the bimodule problem ${\ca}=(\K ,\V )$
provided there exists a  morphism $\py =(\py_0,\py_1):\ti {\ca}\myto{\ca}$ (\define{covering morphism})
such that $\py_{0}$ is a surjection on the objects and $\py_0,\py_1$
induce the following isomorphisms for any $\ti{A},\ti{B}\in\Ob{\ti\K}$ (\cite{DrOvs}):
\begin{gather*}
\dsum_{\ti{X}\in\py_0^{-1}(\py_0(\ti B))\hskip-3em} \ti
\K (\ti{A},\ti{X}) \simeq
\K (\py_0(\ti{A}),\py_0(\ti{B})),\quad
\dsum_{\ti{Y}\in\py_0^{-1}(\py_0(\ti{A}))\hskip-3em} \ti
\K(\ti{Y},\ti{B})\simeq \K(\py_0(\ti{A}),\py_0(\ti{B})),\\
\dsum_{\ti{X}\in\py_0^{-1}(\py_0(\ti{B}))\hskip-3em} \ti\V (\ti{A},\ti{X}) \simeq \V(\py_0(\ti{A}),\py_0(\ti{B})),\quad
\dsum_{\ti{Y}\in\py_0^{-1}(\py_0(\ti{A}))\hskip-3em} \ti\V (\ti{Y},\ti{B})\simeq \V(\py_0(\ti{A}),\py_0(\ti{B})).
\end{gather*}
In this case the bimodule problem $\ca$ is called a \define{base} of the covering.

The coverings with a fixed base form a category over $\ca$ in a standard way:
a morphism of the coverings $\py:\ti{\ca}\myto{\ca}$ and
$\py^{\prime}:\ti{\ca^{\prime}}\myto{\ca}$
is a morphism of bimodule problems  $\rho:\ti {\ca}
\myto\ti {\ca^{\prime}}$ such that $\py=\py^{\prime}\rho$.

The functor $\py_{*}=\rep\py:\rep\ti{\ca}\myto\rep{\ca}$ between
the representation categories induced by the  morphism $\py: \ti {\ca} \myto {\ca}$
of bimodule problems is called the \define{push-down} functor.
It allows to compare the representation types of bimodule problems  $\ti \ca$ and $\ca$.

\subsection{Galois covering.}
\label{galois_covering}
Let $\ti {\ca}=(\ti \K , \ti \V )$ be a locally finite dimensional bimodule problem, let
$G$ be a group acting freely on $\ti\K$ and $\ti\V$ that means:
\begin{orditem}
\item
there is given a group monomorphism $\sT:G\myto\Aut_{{\k}}(\ti\K,\ti\K)$
which defines a free action $\sT_{\ti{\K}}:G\times\ti\K \myto\ti\K$ of $G$ on $\ti\K$ such that\\
$G\times\Ob{ \ti \K }\ni(g,A)\omtol^{\sT_{\ti{\K}}}gA=\sT(g)(A)\in\Ob{\ti\K }$;

\item
an action $\sT_{\ti{\V}}:G\times\ti\V \myto\ti\V$ of $G$ on ${\ti{\V}}$
is a family of ${\k}$-isomorphisms
$\sT_{\ti{\V}}(g)(A,B):{\ti{\V}}(A,B)\myto{\ti{\V}}(g A,g B)$
such that $$\sT_{\ti{\V}}(g)(f_1 v f_2)=\sT_{\ti{\K}}(g)(f_1)\sT_{\ti{\V}}(g)(v)\sT_{\ti{\K}}(g)(f_2)$$ for any $f_1,f_2\in\ti\K$, $v\in\ti\V$ and $g\in G$ once the composition $f_1 v f_2$ is specified.
\end{orditem}

For a locally finite dimensional bimodule problem $\ti {\ca}=(\ti \K , \ti \V )$ and
a group $G$ acting freely on $\ti\K$ and $\ti\V$, we construct the bimodule problem $\ca=(\K ,\V )$ and a covering morphism $\py =(\py_0,\py_1):\ti {\ca}\myto{\ca}$ in a following way. Let the set $\Ob\K $ be the set $(\Ob{\ti\K })/G$ of orbits, and let the functor $\py_0$ be the natural projection $\Ob\ti\K\to\Ob\K$ on objects. For objects $\ti A,\ti B\in\Ob\ti\K$,
let us identify an element $\vi\in{\ti{\K}}(\ti A,\ti B)$ ($a\in{\ti{\V}}(\ti A,\ti B)$ respectively)
with the corresponding element of sum $\ds\oplus_{\ti X\in \py_0^{-1}(\py_0(\ti{A})),\ti Y\in \py_0^{-1}(\py_0(\ti{B}))}{\ti\K }(\ti X,\ti Y)$
($\ds\oplus_{\ti X\in \py_0^{-1}(\py_0(\ti{A})),\ti Y\in \py_0^{-1}(\py_0(\ti{B}))}{\ti\V }(\ti X,\ti Y)$).
For $A,B\in\Ob\K $ we define
$$\ds\K (A,B)=(\hskip-2.5em\dsum_{\ti X\in \py_0^{-1}(A),\ti Y\in \py_0^{-1}(B)}\hskip-2.5em{\ti\K }(\ti X,\ti Y))/\ti\K_G,\quad
\ds\V (A,B)=(\hskip-2.5em\dsum_{\ti X\in \py_0^{-1}(A),\ti Y\in \py_0^{-1}(B)}\hskip-2.5em{\ti\V }(\ti X,\ti Y))/\ti\V_G$$
where $\ti{\K}_G$ ($\ti\V _G$ respectively) is the subspace
generated by $\vi-g\vi$ ($a-ga$)
provided $\vi$ ($a$) runs ${\ti\K }(\ti X,\ti Y)$ (${\ti\V }(\ti X,\ti Y)$) for all $\ti X,\ti Y\in\Ob\ti\K$ such that
$\py_0(\ti X)=A$, $\py_0(\ti Y)=B$, and $g$ runs $G$.
We denote the class of $\vi$ ($a$) by $G\vi\in\K (A,B)$ ($Ga\in\V (A,B)$).
For $\vi\in{\ti{\K}}(\ti X,\ti Y)$, $\psi\in{\ti{\K}}(g\ti Y,\ti Z)$,
the composition of $G\vi$ and $G\psi$ is defined by $G(b(ga))$, and
the sum of $Ga$ and $Gc$ for $c\in{\ti{\K}}(g \ti X,g\ti Y)$ is $G(ga+c)$. Now we can define the functor $\py_0$ on morphisms by the map ${\ti{\K}}(\ti A,\ti B)\ni\vi\omto G\vi\in\K(\py_0(\ti A),\py_0(\ti B))$.
The $\K$-bimodule structure on $\V$ and the map $\py_1$ are defined similarly.

The bimodule problem $\ca$ is called a $G$-\define{quotient} of $\ti\ca$ and is denoted by ${\ti\ca}/G$.
The constructed morphism $\py_G=\py:{\ti\ca}\to\ca$ of bimodule problems
mapping an object $\ti X$ to $G \ti X$ and an arrow $a:\ti X\to \ti Y$ to $Ga:G\ti X\to G\ti Y$ is correctly defined,
is a covering morphism and is called a \define{quotient morphism} of the bimodule problems.

A covering isomorphic to the defined above covering
$\py_G:{\ti\ca}\myto{\ti\ca }/G$
is called \define{a Galois covering} with the \define{fundamental} group $G$.

\begin{theorem}[\cite{DOF}] \label{t_dof}
Let $\py_G:{\tica}\myto{\A}$ be a Galois covering of a bimodule problem $\A$ with a fundamental group $G$, and let the covering $\tica$ be locally representation-finite. Then the push-down functor $\rep\py_{G}:\rep\tica\myto\rep\A$ is a Galois covering of $\rep\A$ with the fundamental  group $G$, and $\A$ is locally representation-finite.
If $\A$ is finite dimensional, then $\A$ is of finite representation type.
\end{theorem}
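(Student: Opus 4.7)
The plan is to imitate the classical Gabriel push-down argument for locally bounded $\k$-categories, transcribed to the bimodule-problem setting where one has a pair $(\K,\V)$ rather than a single category. Throughout, I would exploit that the fibers $\py_0^{-1}(A)$ are $G$-torsors and that the defining isomorphisms of a Galois covering express $\K(A,B)$ and $\V(A,B)$ as $G$-quotients of direct sums over the fiber.

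First I would define the push-down explicitly. For $\ti M=(\ti M_{\ti\K},\ti M_{\ti\V})\in\rep\tica$ with finite support, set $(\py_*\ti M)_\K(A)=\dsum_{\ti X\in\py_0^{-1}(A)}\ti M_{\ti\K}(\ti X)$ for every $A\in\Ob\K$, and define $(\py_*\ti M)_\V$ via the fiber decomposition of $\V$ coming from the covering isomorphisms in Section~\ref{galois_covering}. Local representation-finiteness of $\tica$ guarantees that these sums are finite. The group $G$ acts on $\rep\tica$ by $g\cdot\ti M=\sT(g)^*\ti M$, and a direct check using the compatibility $\sT_{\ti\V}(g)(f_1vf_2)=\sT_{\ti\K}(g)(f_1)\sT_{\ti\V}(g)(v)\sT_{\ti\K}(g)(f_2)$ shows $\py_*(g\ti M)\cong \py_*\ti M$ naturally, so $\py_*$ is a $G$-invariant functor.

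The next and main step is to verify the two defining properties of a Galois covering of categories:
\begin{ordenumbr}
\item For all $\ti M,\ti N\in\rep\tica$ the natural map
\[
\textstyle\dsum_{g\in G}\Hom_{\rep\tica}(\ti M,g\ti N)\myto\Hom_{\rep\ca}(\py_*\ti M,\py_*\ti N)
\]
is a $\k$-linear isomorphism.
\item Every indecomposable $M\in\ind\ca$ is isomorphic to $\py_*\ti M$ for some indecomposable $\ti M\in\ind\tica$.
\end{ordenumbr}
Item~(1) follows from the covering isomorphisms applied componentwise: a morphism $\py_*\ti M\to\py_*\ti N$ is a matrix of elements of $\K(A,B)$, each of which decomposes uniquely as a $G$-sum of elements of $\ti\K(\ti X,\ti Y)$; the relation $N_\V f-fM_\V=0$ in $\rep\ca$ is exactly the $G$-sum of the corresponding relations in $\rep\tica$. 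Item~(2), the genuinely hard point, goes by the standard argument: choose any preimage $\tilde A\in\py_0^{-1}(A)$ for some $A$ in the support of $M$, build an adjoint ``pullback'' $\py^*M$, decompose it into a Krull--Schmidt sum of indecomposables in $\rep\tica$ (possible because each $\py^*M$ is again locally finite-dimensional), and use the free action of $G$ on $\Ob\ti\K$ together with (1) to show that one of its indecomposable summands projects back to $M$.

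Once (1) and (2) are in place, local representation-finiteness of $\ca$ is immediate: for any $A\in\Ob\K$, the indecomposables of $\ca$ supported at $A$ come, via $\py_*$, from indecomposables of $\tica$ supported at some $\ti X\in\py_0^{-1}(A)$; picking a representative $\ti X_0$ in each $G$-orbit reduces this to finitely many indecomposables of $\tica$ by hypothesis, and each produces only one isoclass downstairs. If $\A$ is finite dimensional then $\Ob\K$ is finite, and Lemma~\ref{Lemma-le_defshur}-style finite-support arguments bound the total number of isoclasses of indecomposables, giving finite representation type. The principal obstacle is step~(2): constructing the lift of an arbitrary indecomposable representation of $\A$ without having a coherent global ``fundamental domain''; it is here that one really uses the freeness of the $G$-action and a Krull--Schmidt/orbit-counting argument in $\rep\tica$.
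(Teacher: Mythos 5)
The paper does not prove this statement at all: Theorem~\ref{t_dof} is imported verbatim from the reference cited in its header, and no argument for it appears anywhere in the text. So there is no internal proof to compare yours against; what follows is an assessment of your outline on its own merits.

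Your plan is the classical Bongartz--Gabriel push-down argument transcribed to pairs $(\K,\V)$, and steps that you treat as routine (the explicit construction of $\py_*$, the $G$-invariance, and the Hom-space isomorphism in item~(1)) really are routine consequences of the fiber decompositions in Section~\ref{galois_covering}, at least for representations of finite support. The gap is in item~(2), and it sits exactly where you parenthetically wave it away. You propose to decompose the pull-up $\py^*M$ into a Krull--Schmidt sum of indecomposables, ``possible because each $\py^*M$ is again locally finite-dimensional.'' That justification is insufficient: when $G$ is infinite, $\py^*M$ has infinite support, and a locally finite-dimensional representation of a locally finite-dimensional problem need not decompose into finite-dimensional indecomposables at all. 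The decomposition you need is the splitting theorem for locally representation-finite categories (every locally finite-dimensional module over such a category is a direct sum of finite-dimensional indecomposables), and this is precisely the point at which the hypothesis that $\tica$ is locally representation-finite enters the proof in an essential way --- without it the push-down functor is in general not dense (there exist indecomposables of the second kind that do not lift). Beyond invoking that theorem, you still owe the argument that $M$ is recovered as a direct summand of $\py_*$ applied to one of the resulting indecomposable summands; this uses the freeness of the $G$-action and the locality of endomorphism rings, and is not a formal consequence of item~(1). Once item~(2) is genuinely established, your deductions of local representation-finiteness of $\A$ and of finite representation type in the finite-dimensional case are correct.
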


\section{Existence of a standard minimal non-schurian bimodule subproblem}

\subsection{2-dimensional complex for one-sided bimodule problem.}
A $2$-dimensional \define{cell complex} $\bSigma=(\bSigma_0,\bSigma_1,\bSigma_2)$
associated with a bimodule problem $\ca\in\cC$ having a quasi multiplicative basic bigraph $\si=( \Sigma_0,\Sigma_1)$ (see \cite{bgoadm2011})
consists of:

\begin{orditem}
\item the sets $\bSigma_0=\siz$, $\bSigma_1=\sio$;

\item
the set $\bSigma_2$ of $2$-cells formed by the set $\rT$ of \define{triangles} $\sC=(a,b,\vi)$ for a $\vi\in\Si_1^1$ such that $\con_a(\vi b)\ne0$, and the set $\rQ$ of \define{quadrangles} $\lozenge$ for a joint arrow $\vi\in\si_1^1$ with  $\rP_{\vi}=\{(a_1,b_1),(a_2,b_2)\}$, $a_1\ne a_2$, $b_1\ne b_2$. We can depict quadrangle by a bigraph
$\xymatrix@C0.50cm@R4pt{
&&\ar@{..>}[dd]^{\vi}&&\\
\ar[drr]_{a_1}\ar[urr]^{b_1}  && &&\ar[dll]^{a_2}\ar[ull]_{b_2}\\
&&&&}$

\item
a map $\partial=\partial_\bsi:\bsit\myto\cycles_{\Sigma}$ is defined by $\partial(\sC)=\langle \vi ba^{-1}\rangle$ for a triangle $\sC$, and by $\partial(\lozenge)=\langle a_2^{-1} a_1 b_1^{-1} b_2\rangle$ for a quadrangle $\lozenge$.
\end{orditem}




\begin{remark}
Definition of a quasi multiplicative basis for a bimodule problem $\ca\in\cC$ implies the following cell properties:
\begin{ordenumbr}

\item\label{geombas1}
every $\vi\in\sioo$ with $\rP_\vi\ne\varnothing$ belongs either to one or two triangles from the set $\rT_0=\{(a,b,\vi)\in\rT \mid a,b\in\si_1^0\}$;

\item\label{geombas3}
for $a,b\in\si_1^0$, $a\ne b$, such that $s(a)=s(b)$, the pair $(a,b)$
belongs to at most $2$ triangles from $\rT_0$;

\item\label{geombas2}
if $(a_1,b_1,\vi)$, $(a_2,b_2,\vi)\in\rT_0$ are different triangles, then $a_1\ne a_2$, $b_1\ne b_2$;

\item\label{geombas4}
if $(a,b,\vi_1), (a,b,\vi_2)\in\rT_0$ and $\vi_1\ne \vi_2$,
then there exist $(a_i,b_i,\vi_i)\in\rT_0$, $i=1,2$, such that  $a_1\ne a_2$, $b_1\ne b_2$.
\end{ordenumbr}
\end{remark}

\subsection{Universal covering associated
with a quasi multiplicative basis of schurian bimodule problem.}


\begin{lemma}\label{Lemma-le_concov}
Let $\ca\in\cC$ be a connected locally finite dimensional bimodule problem, let $\bsi=\bsi_\ca$  be 2-dimensional cell complex associated with a quasi multiplicative basis $\si=\si_\A$ of $\ca$,
and let $\py_{\bsi}:\btisi\to\bsi$ be a covering of abstract complexes.
Then there exist a locally finite dimensional bimodule problem $\ti\ca=({\ti\K },{\ti \V })\in\cC$ with a basis $\tisi$ and associated 2-dimensional cell complex $\btisi$, and a Galois covering morphism $\py_\ca:\ti\ca\to\ca$ with the fundamental group $\sg(\bsi)$ such that the diagram\vspace*{-2ex}
$$
\begin{CD}\label{comdiabas}
 \tisi  @>{i_\tisi} >>\ti\ca  \\[-1ex]
 @V\py_\bsi VV      @VV\py_{\ca} V\\[-1ex]
 \si @>i_\si>> \ca\\
\end{CD} $$\vskip-1ex\noindent
commutes, where $i_\si: \si \to \ca$ and $i_\tisi: \tisi \to \ca$ are natural embeddings.
\end{lemma}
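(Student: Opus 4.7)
The plan is to transport the cell-complex covering $\py_\bsi:\btisi\to\bsi$ to a covering at the level of bimodule problems. The combinatorial data in $\btisi_0,\btisi_1,\btisi_2$ will be read as objects, basis elements and multiplicative relations of a new bimodule problem $\ti\ca$; then the natural action of $G=\sg(\bsi)$ on $\btisi$ described in subsection \ref{intr012} will realise $\ca$ as the Galois quotient $\ti\ca/G$. Concretely I would declare $\Ob\ti\K=\btisi_0$, and for each pair $(\ti A,\ti B)$ take $\ti\K(\ti A,\ti B)$ (respectively $\ti\V(\ti A,\ti B)$) to be the free $\k$-module on the solid arrows $\tisi_1^0(\ti A,\ti B)$ (respectively the dotted arrows $\tisi_1^1(\ti A,\ti B)$), with the diagonal local endomorphism structure pulled back from $\ca$ via $\py_\bsi$. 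Since $\py_\bsi$ is a covering of bigraphs, its restriction to arrows out of (respectively into) any fixed vertex of $\btisi$ is a bijection onto arrows out of (respectively into) its image in $\si$, so the local decomposition displayed in subsection \ref{galois_covering} holds by construction.

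Next I would transport the composition and the $\K$-bimodule structure. For composable basis elements $\ti x,\ti y\in\tisi_1$, set $x=\py_\bsi(\ti x)$, $y=\py_\bsi(\ti y)$ and expand their composition in $\ca$ in the basis $\si$. The quasi multiplicativity of $\si$ together with the cell properties \ref{geombas1}--\ref{geombas4} constrains the shape of this expansion, and the unique-lifting property of walks lifts each basis element appearing there to a unique element of $\tisi$ whose source and target match those of the intended product; define $\ti y\ti x$ to be the resulting linear combination and extend $\k$-linearly. The 2-cells in $\btisi_2$ are then the local witnesses of these transported relations, triangles encoding elementary compositions and quadrangles encoding the two-term decompositions of joint arrows. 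One then verifies that $\tisi$ is quasi multiplicative, that its associated 2-dimensional cell complex coincides with $\btisi$, and therefore that $\ti\ca\in\cC$.

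The $G$-action on $\btisi$ recalled in subsection \ref{intr012} is free on $\btisi_0$ and $\btisi_1$, commutes with sources, targets, degrees and boundary maps, and permutes 2-cells. Extending it $\k$-linearly to $\ti\K$ and $\ti\V$ produces a free group monomorphism $\sT:G\hookrightarrow\Aut_\k(\ti\K,\ti\K)$ together with a compatible family of $\k$-isomorphisms $\sT_{\ti\V}(g)$. Because composition in $\ti\ca$ was defined basis-by-basis from the fixed basis of $\ca$, the $G$-action automatically commutes with composition and bimodule structure. Applying the quotient construction of subsection \ref{galois_covering} yields $\py_G:\ti\ca\to\ti\ca/G$, and identifying $G$-orbits in $\btisi_0$ and $\btisi_1$ with $\siz$ and $\sio$ gives a canonical isomorphism $\ti\ca/G\simeq\ca$. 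Composing produces the Galois covering $\py_\ca$, and the stated square commutes on the nose because both routes agree on vertices and on basis arrows, so that $\py_\ca\,i_\tisi = i_\si\,\py_\bsi$.

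The main obstacle is the verification in the second step: showing simultaneously that the transported multiplication is well defined and associative, that $\tisi$ is quasi multiplicative with associated complex $\btisi$, and that $\ti\ca$ lies in $\cC$. Well-definedness of each individual summand comes from unique walk lifting, but one must also check that every basis element appearing in the expansion of $yx$ in $\ca$ does admit a lift with source $s(\ti x)$ and target $e(\ti y)$; this is supplied by the 2-cell lifting clause of the complex covering applied to the triangle or quadrangle witnessing the relation in $\ca$, whose unique lift in $\btisi_2$ produces the required lifted basis element with the prescribed endpoints. Associativity then reduces, via this cell-by-cell correspondence, to associativity in $\ca$ combined with the uniqueness of lifts. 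Preservation of membership in $\cC$ is a local property at each vertex and each solid arrow, hence it transfers along the bijections induced by $\py_\bsi$ on stars of cells.
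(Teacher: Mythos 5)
Your construction is essentially the paper's proof: define $\ti\ca$ on $\btisi$ with free generation by the lifted arrows, transport the structure constants $\con_x(ba)$ to the unique lift $\ti x$ with matching source, justify that the product closes up at the correct target by lifting the witnessing $2$-cell (so that $\langle\ti b\ti a\ti x^{-1}\rangle$ bounds a cell and is therefore a cycle), and obtain the Galois structure from the deck-transformation action of $\sg(\bsi)$. The only slip is that you have the roles of solid and dotted arrows reversed: in the paper's conventions $\Rad\ti\K(\ti X,\ti Y)$ is freely generated by the dotted arrows $\btisi_1^1(\ti X,\ti Y)$ and $\ti\V(\ti X,\ti Y)$ by the solid arrows $\btisi_1^0(\ti X,\ti Y)$, consistent with the sign pattern of the Tits form and with the fact that $\Ann_\K\V$ in the proof of the main theorem consists of dotted arrows.
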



\begin{proof}
Let $\Ob{\ti \K  }=\btisi_0$, and let the spaces $\Rad{\ti\K }(\ti X,\ti Y)$,
${\ti \V } (\ti X,\ti Y)$ be freely generated over $\k$ by $\btisi_1^1(\ti X,\ti Y)$
and $\btisi_1^0 (\ti X,\ti Y)$ correspondingly, $\ti X,\ti Y\in\Ob{\ti\K }$.
If ${\ti a}:\ti X\to \ti Y$, ${\ti b}:\ti Y\to \ti Z$ are two elements of $\btisi_1$,
$\py_1({\ti a})=a$, $\py_1({\ti  b})=b$, then for any $x\in\bsi_1$ such that $\con_x (ba)\ne0$ and for a unique $\ti x\in\btisi_1$ such that $\py_1({\ti x})=x$, $s({\ti x})=s({\ti a})$, we set
$\con_{\ti x}({\ti b}{\ti a})=\con_x (ba)$ in $\ti\A$, and $\con_{\ti y}({\ti b}{\ti a})=0$ for any other $\ti y\in\btisi_1$.
The composition ${\ti b}{\ti a}$ is correctly defined since $\langle{\ti b}{\ti a}{\ti x}^{-1}\rangle$ is a bound of a cell in $\btisi$, and
hence it is a cycle in $\btisi$.
Associativity of such composition is obvious.
$\ti{\K} $-bimodule structure on ${\ti \V }$ is defined similarly.
The covering morphism $\py_\ca:\ti\ca\myto\ca$ is uniquely defined by the commutativity of the diagram.
\end{proof}


\begin{remark}\label{lifting}
Let $\py_\ca:\ti\ca\to\ca$ be a Galois covering morphism of bimodule problems, and let $\si$ be a basis of $\ca$.
Then inverse images of elements of $\si_{1}$ form a basis $\ti\si$ of $\ti\ca$, and $\py_\ca$
induces the associated covering $\py_\bsi:\ti\bsi\to \bsi$ of complexes.
This construction is inverse to one in Lemma \ref{Lemma-le_concov}.
\end{remark}

For a bimodule problem $\A\in\cC$, let $\py:\btisi\to\bsi$ be the constructed above universal covering of 2-dimensional complex $\bsi=\bsi_\ca$. By Lemma~\ref{Lemma-le_defshur}, there exists the corresponding to $\py$ covering morphism $\py_\ca:{\ti\ca}\to\ca$ of bimodule problems which we call
\define{an universal covering of bimodule problem  $\ca$ associated with the complex $\bsi$.}

A bimodule problem $\ca$ is said to be \define{(geometrically) simply connected}
provided 2-dimensional complex $\bsi$ over $\si$ is connected and its fundamental group $\sg(\bsi)$ is trivial.
Obviously, simply connected schurian bimodule problem $\ca$ is isomorphic to its universal covering $\ti \ca$.

%
%

\subsection{Minimal non-schurian bimodule problem.}

Recall that for a sincere schurian bimodule problem $\ca$, its basis $\si_\ca$ is $0$--connected.
If \bp $\ca$ is non-faithful, then every sincere $M\in \ind\ca$ is non-schurian.
Therefore, each sincere schurian \bp is faithful.

Let $\ca=(\K,\V)$ be a  sincere non-schurian bimodule problem such that Tits form $q_\ca\in \WPS$,
and for every proper subset $S\subset \Ob\K$ the restricted bimodule problem $\ca_S$ is schurian.
Then  $\ca$ is called a \define{minimal non-schurian bimodule problem}.

Let $\cb,\cc$ be bimodule problems defined by their bigraphs:
$(\si_\cb)_0=\{X\}$, $(\si_\cb)_1=\varnothing$, $(\si_\cc)_0=\{X,Y\}$, $(\si_\cc)_1=(\si_\cc)_1^0=\{a:X\to Y\}$.
If $\ca\in\cC$, $|(\si_\A)_0|\leqs 2$, then $\ca$ is sincere schurian if and only if $\ca\in\{\cb,\cc\}$.

This observation excludes the non-schurian problems having at most two vertices from the consideration,
and helps to describe the minimal non-schurian bimodule problems containing at least $3$ vertices.

\begin{lemma}[\cite{BGOR,bg2}]\label{Lemma-minnonshstr}
Let $\A=(\K,\V )$ be a  minimal non-schurian admitted bi\-module problem with a basis $\si$, and $|\siz|\geqs 3$. If $\A_{\red}=(\K/\!\Ann_\K\bsV,\bsV)$ is a sincere schu\-ri\-an \bp, then there exist two uniquely defined vertices $A,B\in\si_0^+$ such that:
\begin{ordenumbr}
\item
for any sincere $M\in \ind\A$, the vector $\dim_\A M$ is a basic root of Tits quadratic form $q_{\A_{\red}}$
with the singular vertices $A,B$;

\item
if $\Ann_\K \V (A_1,B_1)\ne0$, then the sets $\{A_1,B_1\}$, $\{A,B\}$ coincide.
\end{ordenumbr}
\end{lemma}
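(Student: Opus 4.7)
The plan is to use the schurity of $\A_{\red}$ together with Lemma~\ref{Lemma-le_defshur} to control the dimension vectors of sincere indecomposables of $\A$, and to locate the distinguished pair $(A,B)$ by tracking where the non-scalar endomorphisms forced by non-schurity must live.

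First I would show that any sincere $M\in\ind\A$ remains indecomposable over $\A_{\red}$ with the same dimension vector: the quotient $\bsK\twoheadrightarrow\bsK/\Ann_{\bsK}\bsV$ induces a surjection $\End_\A(M)\twoheadrightarrow\End_{\A_{\red}}(M)$ whose kernel consists of endomorphisms with matrix entries in $\Ann_{\bsK}\bsV$ and is nilpotent, hence contained in the radical of the local ring $\End_\A(M)$. Since $\A_{\red}$ is sincere schurian, Lemma~\ref{Lemma-le_defshur} gives $\dim_\A M\in\roots{q_{\A_{\red}}}$. Moreover, picking an $M$ that is non-schurian over $\A$ (which exists since $\A$ is sincere non-schurian) and a non-scalar $f\in\End_\A(M)$, the image of $f$ in $\End_{\A_{\red}}(M)=\k$ is a scalar $\lambda$, so $g=f-\lambda\cdot 1_M$ is a nonzero endomorphism of $M$ with all matrix entries in $\Ann_{\bsK}\bsV$; a nonzero block $g_{A,B}$ then exhibits a pair $(A,B)$ with $\Ann_{\bsK}\bsV(A,B)\ne 0$.

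The crucial observation used to pin down $(A,B)$ is the converse: once $\Ann_{\bsK}\bsV(X,Y)\ne 0$, multiplication by any nonzero $\vi\in\Ann_{\bsK}\bsV(X,Y)$ placed in an arbitrary $x_X\times x_Y$ block (and zero elsewhere) automatically lies in $\End_\A(M)$, since $\vi\cdot\bsV=\bsV\cdot\vi=0$. This gives the lower bound
\[
\dim\End_\A(M)\;\geqs\;1+\sum_{(X,Y)}x_Xx_Y\dim\Ann_{\bsK}\bsV(X,Y).
\]
I would combine it with a Tits-style inequality $\dim\End_\A(M)-\dim\mathrm{Ext}^1_\A(M,M)\leqs q_\A(\dim M)$ and the weak positivity $q_\A(\dim M)\geqs 1$ to conclude that the sum has exactly one contributing pair, that pair is $(A,B)$, and $x_A=x_B=1$; this already yields uniqueness~(2).

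Finally, to obtain the singular-vertex structure of~(1), for each $C\ne A,B$ the restriction $\A_{\Ob\bsK\setminus\{C\}}$ is schurian by minimality, so the reflected vector $\dim M-2(e_C,\dim M)e_C$, which is again a positive root of $q_{\A_{\red}}$, must correspond via Lemma~\ref{Lemma-le_defshur} to an indecomposable of this proper restriction; this forces $2(e_C,\dim M)=0$. The remaining equalities $2(e_A,\dim M)=2(e_B,\dim M)=1$ then follow from $q_{\A_{\red}}(\dim M)=1$ and $x_A=x_B=1$. The main obstacle is the quantitative step linking the endomorphism lower bound, the Tits-style inequality and minimality: turning these into the precise statements $x_A=x_B=1$ and uniqueness of the contributing pair needs a sharp interaction of the weak positivity of $q_\A$ with the minimality hypothesis, and this is where the bulk of the arguments in \cite{BGOR,bg2} is concentrated.
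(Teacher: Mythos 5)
First, a point of comparison: the paper contains no proof of this lemma at all --- it is imported wholesale from \cite{BGOR,bg2} --- so there is no internal argument to measure yours against; one can only judge the sketch on its own terms. Its first half is sound and does identify the right mechanism: the surjection $\End_\A(M)\twoheadrightarrow\End_{\A_{\red}}(M)$ with nilpotent kernel shows that sincere indecomposables of $\A$ and of $\A_{\red}$ coincide and have dimension vectors in $\roots{q_{\A_{\red}}}$ by Lemma~\ref{Lemma-le_defshur}, and the observation that any element of $\Ann_\K\V(X,Y)$ placed in any block is automatically an endomorphism correctly ties the non-schurity of $\A$ to the support of $\Ann_\K\V$.

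The quantitative core, however, is genuinely missing rather than merely compressed. Your proposed chain of inequalities points the wrong way: from $\dim\End_\A(M)\geqs 1+\sum x_Xx_Y\dim\Ann_\K\V(X,Y)$ and $\dim\End_\A(M)-\dim\mathrm{Ext}^1_\A(M,M)\leqs q_\A(\dim M)$ you obtain only $1+\sum(\cdots)\leqs q_\A(\dim M)+\dim\mathrm{Ext}^1_\A(M,M)$, and weak positivity supplies a \emph{lower} bound $q_\A(\dim M)\geqs 1$, not an upper one; nothing here forces a unique contributing pair or $x_A=x_B=1$. Likewise your reflection argument for part (1) covers only the case $2(e_C,\dim M)=1$ with $x_C=1$: if $2(e_C,\dim M)=-1$ the reflected vector $\dim M+e_C$ is sincere and no proper restriction appears, and if $x_C\geqs 2$ the vector $\dim M-e_C$ is still sincere, so schurity of $\A_{\Ob\K\setminus\{C\}}$ yields no contradiction. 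The missing engine is precisely the quadratic-form statement the paper itself invokes in Step 1 of the proof of Theorem~\ref{theorem-support-indecomposable-and-covering}, namely \cite[Lemma~1]{bg2}: a sincere positive root of a weakly positive unit form which is not a basic root with singular vertices $A,B$ dominates a non-sincere positive root whose support still contains $A$ and $B$. Combined with minimality --- any indecomposable of a proper restriction whose support contains a pair annihilating $\V$ inherits a non-scalar nilpotent endomorphism, contradicting schurity of that restriction --- this gives (1), and applying the same argument to a second annihilated pair $(A_1,B_1)$, together with the uniqueness of the singular vertices of a basic root, gives (2). Your sketch names the right ingredients but explicitly defers exactly this step to the references, so as written it does not constitute a proof.
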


A minimal non-schurian bimodule problem $\ca$ satisfying the conditions of Lemma \ref{Lemma-minnonshstr}
is called a \define{standard minimal non-schurian bimodule problem with singular vertices $A$ and $B$}.

\subsection{Schurity and coverings: the main result.}




By the construction of universal covering, we assume that $\py_i(\tisi_i)=\si_i$, $i=0,1$, for the bases $\tisi$ and $\si$ of $\ti\ca$ and $\ca$ respectively.

\begin{theorem}
\label{theorem-support-indecomposable-and-covering}
Let $\ca\in\cC$ be a connected finite dimensional bimodule problem with weakly positive Tits form $q_\ca$, let $\ti\ca\in\cC$ be a schurian bimodule problem, and let $\py: \ti\ca{\xrightarrow{\hspace{10pt}}} \ca$ be an universal covering.
Then either $\ca$ is schurian, or contains a dotted loop, or some restriction $\ca_S$ is a standard minimal non-schurian bimodule problem.
\end{theorem}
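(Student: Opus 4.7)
The plan is first to reduce to finite representation type, and then to locate a minimal non-schurian restriction that meets the standardness hypotheses.  Applying Lemma~\ref{Lemma-le_defshur} to the schurian cover $\ti\ca$ shows that $\ti\ca$ is representation-finite with a weakly positive Tits form.  The universal covering $\py:\ti\ca\to\ca$ is a Galois covering with fundamental group $\sg(\bsi)$, so Theorem~\ref{t_dof} yields that $\ca$ is locally representation-finite; since $\ca$ is finite-dimensional, $\ca$ is in fact of finite representation type.  This is the starting point for all further reasoning.

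Suppose now that $\ca$ is non-schurian and contains no dotted loop; the aim is to produce a subset $S\subset\Ob\K$ for which $\ca_S$ is standard minimal non-schurian.  I would choose $S$ minimal (by inclusion) among the subsets of $\Ob\K$ for which $\ca_S$ is not schurian; such $S$ exists, since the support of any non-schurian indecomposable already provides a candidate and $\Ob\K$ is finite.  By minimality, every proper restriction $\ca_{S'}$ with $S'\subsetneq S$ is schurian, and any non-schurian indecomposable of $\ca_S$ has support equal to $S$, for otherwise its support would yield a strictly smaller non-schurian restriction.  Hence $\ca_S$ is sincere non-schurian, i.e.\ a minimal non-schurian bimodule problem.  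The Tits form $q_{\ca_S}$ is obtained from $q_\ca$ by restricting to coordinates indexed by $S$, so it remains weakly positive, and $\ca_S\in\cC$ as the restriction of an admitted problem.

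Provided $\ca_S$ has at least three vertices, the cited result of \cite{bg2}---every minimal admitted non-schurian bimodule problem with weakly positive Tits form is standard---applies directly and gives the desired conclusion, reproducing the structure described in Lemma~\ref{Lemma-minnonshstr}.

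The step I expect to be the main obstacle is ruling out the low-cardinality cases.  If $\ca_S$ has a single vertex, the absence of dotted loops in $\ca$ (and hence in $\ca_S$) forces $\ca_S=\cb$, contradicting the non-schurity of $\ca_S$.  The two-vertex case is more delicate: by the observation recorded just before Lemma~\ref{Lemma-minnonshstr}, the only sincere schurian two-vertex bimodule problems in $\cC$ are $\cb$ and $\cc$, so a sincere non-schurian two-vertex $\ca_S$ would have to carry additional arrows.  I would exclude this either by a direct case analysis of the two-vertex configurations in $\cC$ compatible with no dotted loops and weak positivity of $q_{\ca_S}$, or by lifting a non-scalar endomorphism of an indecomposable of $\ca_S$ through the Galois correspondence to a non-scalar endomorphism in $\rep\ti\ca_{\py_0^{-1}(S)}\subset\rep\ti\ca$, contradicting schurity of $\ti\ca$.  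Once these edge cases are eliminated, $\ca_S$ has at least three vertices and the previous step completes the argument.
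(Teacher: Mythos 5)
Your reduction to a minimal non-schurian restriction $\ca_S$ is fine (and matches the paper's setup in spirit), but the decisive step is missing. You invoke ``the result of \cite{bg2}'' as if it said that \emph{every} minimal admitted non-schurian problem with weakly positive Tits form is standard; that is the introduction's informal summary, not the actual statement. Lemma~\ref{Lemma-minnonshstr} carries the additional hypothesis that $(\ca_S)_{\red}=(\K_S/\Ann_{\K_S}\V_S,\,\V_S)$ is a \emph{sincere schurian} bimodule problem, and verifying this hypothesis is the entire content of the paper's proof --- it is also the only place where the assumption that the universal covering $\ti\ca$ is schurian enters. Your argument never uses that assumption in an essential way (only in an optional aside about the two-vertex case), which is the telltale sign of the gap: if your reading of \cite{bg2} were correct, the theorem would follow without any covering hypothesis at all.

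Concretely, the paper takes $\ti S=\supp\ti X$ for an indecomposable $\ti X$ of $\ti\ca$ with $|\ti S|$ minimal such that $\py_{\ti S}:\ti\ca_{\ti S}\to\ca_S$ is not an isomorphism, and then shows in several steps that the failure of $\py_{\ti S}$ to be an isomorphism is confined to a set $\Chi$ of arrows between one distinguished pair of vertices $A_1,A_2$: first that $\py_0|_{\ti S}$ is a bijection on vertices (via root combinatorics for the basic root $\dim\ti X$), then that $\Chi$ consists of dotted arrows (a Tits-form computation: a missing solid arrow would force $q_\ca(x)\leqslant 0$, contradicting weak positivity), and finally that $\Chi\subset\Ann_{\K_S}\V_S$ (because triangles lift along the covering). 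Only then does $(\ca_S)_{\red}\cong\ti\ca_{\ti S}$ follow, giving the schurian hypothesis needed to apply Lemma~\ref{Lemma-minnonshstr}. None of this appears in your proposal, so the proof is incomplete at its central point. (Your low-cardinality worry, by contrast, is handled in the paper simply by the definition of the class $\cC$ together with the no-dotted-loop assumption, and your opening reduction to finite representation type via Theorem~\ref{t_dof}, while correct, is not needed.)
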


\begin{proof}
Suppose that $\ca=(\K,\V)$ is not schurian and does not contain dotted loops. Let $\si$ and
$\ti\si$ be the  bases of $\ca$ and $\ti\ca$ correspondingly
such that $\py(\ti\si)=\si$.
We will mark the object (vertex, arrow, etc.) related to the covering $\ti\ca$
by the sign $\ti{\ }$, and its image in $\ca$ will be denoted by the same letter without $\ti{\
}$.



Since bimodule problem $\ti\ca$ is schurian and $\ca$ is not, there exists a representation $\ti X \in\ind \ti\ca$ such
that for $\ti S =\supp \ti X$, the induced morphism  $\py_{\ti S}:$
$\ti\ca_{\ti S}\myto\ca_S$ is not an isomorphism. Then the restriction $\ca_S$ is sincere minimal non-schurian.
Let us choose $\ti X$ with minimal possible $|\ti S|$.

Remark that for any $S\subset\siz$, $|S|\leqslant2$, the restriction $\ca_S$ is schurian by definition of class $\cC$ and our assumption. So we can assume $|S|\geqs3$, and hence $|\ti S|\geqs3$. By Remark \ref{lifting}, the induced by $\py$ morphism $\py_{\ti S}:\btisi_{\ti S}\myto\bsi_S$ of the associated complexes is not an isomorphism. Then:

1) either the induced by $\py$ map
$\py_0|_{\ti S}: (\tisi_{\ti S})_{0}\to (\si_S)_{0}$ is not a bijection on the vertices;

2) or the map $\py_0|_{\ti S}$ is a bijection,
but there exist ${\tial}_1,{\tial}_2\in\ti S$ such that
$\py(\tisi_1({\tial}_1, {\tial}_2))\ne\si_1(A_1,A_2).$

\setcounter{step}{0} \best{}
The first case is impossible.
\enst

\define{Proof.}
Suppose, there exist  $\tial_1,\tial_2\in\ti S$, $\tial_1\ne\tial_2$, such that  $A_1=A_2\in S$. Since $\ti\A_{\ti S}$ is schurian, by Lemma~\ref{Lemma-le_defshur}, $\ti x=\dim\ti X$ is a sincere positive root of weakly positive unit quadratic form $q_{\ti\A_{\ti S}}$. If $\ti x$ is not basic with singular vertices $\tial_1$, $\tial_2$, then, by \cite[Lemma~1]{bg2}, there is a non-sincere $\ti y<\ti x$ such that $\tial_1,\tial_2\in\supp\ti y$ which contradicts to the minimality of $|\ti S|$ by Lemma~\ref{Lemma-le_defshur}. Therefore, $\ti x$ is a basic root with the singular vertices $\tial_1$, $\tial_2$. By definition of basic root, the vertices $\tial_1$ and $\tial_2$ are defined uniquely, i.~e. $B_1\ne B_2$ for any pair $\tibe_1,\tibe_2\in\ti S$ of different vertices such that $\{\tibe_1,\tibe_2\}\ne\{\tial_1,\tial_2\}$.

Since $\ti\A_{\ti S}$ is $0$-connected, there is a vertex $\tiga\in\ti S$ connected with $\tial_1$ by a solid arrow
${\ti{a}}:\tiga\myto\tial_1$ (up to direction of $\ti{a}$). If $\tiga=\tial_2$, then the quadratic form $q_{\ti\A_{\ti S}}$ is not \WP. So $\tiga\ne\tial_2$. By Lemma \ref{Lemma-le_defshur}, for a positive root $\ti z=\ti x - e_{\ti A_1}$, there is an indecomposable representation $\ti Z$ of the dimension $\ti z$ with $|\supp \ti Z|<|\ti S|$. The corresponding restriction $\py_{\supp \ti Z}:\ti\A_{\supp \ti Z}\to \A_S$ is not an isomorphism since the arrow $a: C\to A_1$ does not have an inverse image in $\ti\si_1(\ti C, \ti A_2)$, which contradicts to the minimality of $|\ti S|$.
\hfill $\qed$

Now we can assume that the restriction $\py_0|_{\ti S}$ is a bijection, and the case 2) holds.

\best{}
Let $\tial_1,\tial_2\in\ti S$ and $\py(\tisi_1(\tial_1,\tial_2))\ne\si_1(A_1,A_2)$.
Then the set of vertices $\{\tial_1,\tial_2\}$ is uniquely defined, and $\dim_{\ti\A}\ti X$ is a basic root of $q_{\ti\A_{\ti S}}$ with singular vertices $\tial_1,\tial_2$.
\enst

The proof is similar.

\best{}
The set $$\Chi=\big(\si_1(A_1,A_2)\setminus\py_1(\tisi_1(\tial_1,\tial_2)\big)
\cup  \big(\si_1(A_2,A_1) \setminus \py_1(\tisi_1(\tial_2,\tial_1)\big)$$ consists of dotted arrows.
\enst

\define{Proof.}
Since bimodule problem $\A$ is admitted, either $\Chi\subset\si_1^0$, or $\Chi\subset\si_1^1$. Suppose that $\Chi\subset\si_1^0$.
Let $x=\dim_\ca\rep\py(\ti X)$. Then $x_A={\ti x}_\tial$ for any $\tial\in\ti S$. Hence
\begin{gather*}
\ds{ q_\ca(x)={\sum}_{(A,B)\in S\times S } (\delta_{AB}+|\sioo(A,B) |- |\sizo(A,B) |) x_A x_B= }
\\
={{\sum}_{(\tial,\tibe)\in \ti S } (\delta_{\tial\tibe}+|\tisi_1^0(\tial,\tibe)|
- |\tisi_1^0(\tial,\tibe) |) x_\tial x_\tibe - } {\sum}_{a\in\Chi} x_{A_1} x_{A_2}=
\\
=\ds{ q_{\ti\ca}(\tix)-{\sum}_{a\in\Chi} x_{A_1} x_{A_2}= } 1-\ds{\sum}_{a\in\Chi} x_{A_1} x_{A_2} \leqslant 1-1=0,
\end{gather*}
that contradicts to the weak positivity of $q_\ca$.
\hfill $\qed$

\best{}
$\Chi\subset\Ann_{\K _S}\V _S$.
\enst

\define{Proof.}
If $\vi\in\Chi\backslash\Ann_{\K _S}\V _S$, then (up to direction of $\vi$) there is a triangle $(a_1, a_2,\vi)\in \rT_0$ with
$a_1: B\myto A_1$, $a_2: B\myto A_2$ for some $B\in S$.
Since the triangles lifts up in the covering, then for the unique $\tibe\in\ti S$ there exist
$\tia_1:\tibe\myto\tial_1,\tia_2:\tibe\myto\tial_2$ in $\tisi$ and a
triangle $(\tia_1,\tia_2,\ti\vi)\in{\ti \rT}_0$.
Therefore $\py_0(\ti \vi)=\vi$ that contradicts to the definition of $\Chi$.
\end{proof}

Hence, $(\A_S)_{\red}=(\K _S/\Ann_{\K _S}\V _S,\V _S)$ is isomorphic to $\ti\A_{\ti S}$, and therefore $(\A_S)_{\red}$ is sincere schurian. By Lemma~\ref{Lemma-minnonshstr}, $\A_S$ is standard minimal non-schurian bimodule problem, which completes the proof of Theorem.


%
%
%


\section*{Conclusion}

The article is a part of research of the representation finiteness problem
for a wide class of multi-vector space categories consisting of so called one-sided bimodule problems.
We use the construction of the universal covering of an admitted bimodule problem
in order to obtain some schurity criterium for the bimodule problems from our class.
We are going to study representation type of one-sided bimodule problems using developed technique.

\bibliographystyle{pigc_plain}
\bibliography{biblio.bib}


\printArticleAuthorsInfo{\thearticlesnum}

\end{document}